\def\C{\hbox{\font\dubl=msbm10 scaled 1100 {\dubl C}}}
\def\R{\hbox{\font\dubl=msbm10 scaled 1100 {\dubl R}}}
\def\sR{\hbox{\font\dubl=msbm10 scaled 900 {\dubl R}}}
\def\N{\hbox{\font\dubl=msbm10 scaled 1100 {\dubl N}}}
\def\K{\hbox{\font\dubl=msbm10 scaled 1100 {\dubl K}}}
\def\sK{\hbox{\font\dubl=msbm10 scaled 900 {\dubl K}}}
\def\Re{{\rm{Re}}\,}
\def\d{\,{\rm{d}}}
\newtheorem{Theorem}{Theorem}
\newtheorem{Corollary}[Theorem]{Corollary}
\newtheorem{Lemma}[Theorem]{Lemma}
\title[Value-distribution of the zeta-function on the critical line]{On the value-distribution of the Riemann zeta-function on the critical line}
\author[Justas Kalpokas, J\"orn Steuding]{Justas Kalpokas, J\"orn Steuding}
\date{May 2009}
\begin{document}

\begin{abstract}
$\mathbb{R}$
 We investigate the intersections of the curve $\sR\ni t\mapsto \zeta({1\over 2}+it)$ with the real axis. We show that if the Riemann hypothesis is true, the mean-value of those real values exists and is equal to $1$. Moreover, we show unconditionally that the zeta-function takes arbitrarily large real values on the critical line. 
\end{abstract}

\maketitle

{\small \noindent {\sc Keywords:} Riemann zeta-function, value-distribution, critical line\\
{\sc  Mathematical Subject Classification:} 11M06}
\\ \bigskip

\section{Introduction and statement of the main results}

It is conjectured that the set of values of the Riemann zeta-function $\zeta(s)$ on the critical line $s={1\over 2}+i\R$ is dense in the complex plane. It is even no non-empty open subset of $\C$ known such that $\zeta({1\over 2}+i\R)$ is dense in this set. Some results give evidence in this direction. Bohr \& Courant \cite{bohr} have shown the denseness of $\zeta(\sigma+i\R)$ in the complex plane for any fixed $\sigma\in({1\over 2},1]$ and Selberg (unpublished) has proved that the values taken by an appropriate normalization of the Riemann zeta-function on the critical line are Gaussian normally distributed (the first published proof is due to Joyner \cite{joyner}). However, Garunk\v stis \& Steuding \cite{gs} showed recently that the curve $\R\ni t\mapsto(\zeta({1\over 2}+it),\zeta'({1\over 2}+it))$ is not dense in $\C^2$. In this article we investigate in particular the real points of $\zeta({1\over 2}+it)$ for real $t$. We show that the mean-value of those real values exists and is equal to $1$ provided Riemann's hypothesis is true, and, unconditionally, that the zeta-function takes arbitrarily large real values on the critical line. In his monograph \cite{edwa}, Edwards writes {\it ''... the real part of $\zeta$ has a strong tendency to be positive''} (page 121). We shall explain this phenomenon. For this purpose, we estimate the number of intersections $\zeta({1\over 2}+i\R)$ with any given straight line $e^{i\phi}\R$ and prove asymptotic formulae for the first and second associated discrete moment of those values. This approach yields new information on the value-distribution of the zeta-function on the critical line. 
\medskip

Let $\phi\in[0,\pi)$. The functional equation for $\zeta(s)$ in its asymmetrical form is given by
\begin{equation}\label{feq}
\zeta(s)=\Delta(s)\zeta(1-s),\qquad \mbox{where}\quad \Delta(s):=2^s\pi^{s-1}\Gamma(1-s)\sin({\textstyle{\pi s\over 2}}).
\end{equation}
It is essential for our approach that $\Delta(s)\Delta(1-s)=1$ (which follows directly from the functional equation and will be frequently used in the sequel). Consequently, $\Delta({1\over 2}+it)$ is on the unit circle for any real $t$. Moreover, it follows that
\begin{equation}\label{vieh}
\Phi(t):=\Phi(t;\phi):=\zeta({\textstyle{1\over 2}}+it)-e^{2i\phi}\zeta({\textstyle{1\over 2}}-it)=\zeta({\textstyle{1\over 2}}+it)(1-e^{2i\phi}\Delta({\textstyle{1\over 2}}-it))
\end{equation}
vanishes if and only if either ${1\over 2}+it$ is a zero of the zeta-function or
$$
1=e^{2i\phi}\Delta({\textstyle{1\over 2}}-it)=e^{-2i\phi}\Delta({\textstyle{1\over 2}}+it),
$$
where the last equality is derived by conjugation. Two values for $\phi$ are of special interest here: for $\phi=0$ the roots of the equation $\Delta({1\over 2}+it)=e^{2i\phi}$ correspond to real values of $\zeta({1\over 2}+it)$, whereas $\phi={\pi\over 2}$ yields the purely imaginary values; this follows immediately from the fact that for such values of $t$
$$
\zeta({\textstyle{1\over 2}}+it)=\pm\zeta({\textstyle{1\over 2}}-it)=\pm\overline{\zeta({\textstyle{1\over 2}}+it)}.
$$
The roots $t$ of $\Delta({1\over 2}+it)-1$ are called Gram points after Gram \cite{gram} who observed that the first of those roots separate consecutive zeta zeros on the critical line; we shall discuss our results concerning this aspect in the last but one section. For the roots of $\Phi(t)$ which are no ordinates of zeros of the zeta-function we obviously have
$$
\arg\zeta({\textstyle{1\over 2}}+it)\equiv \phi\bmod \pi.
$$
Hence, the roots of $\Phi(t)$ correspond to the intersection points of the curve $\zeta({1\over 2}+i\R)$ with the straight line $e^{i\phi}\R$ through the origin. In this note we are mainly concerned about the non-zero intersection points. We expect that all but finitely many of the roots of the two factors of $\Phi(t)$ in (\ref{vieh}) are distinct (for each fixed value of $\phi$). Since the set of zeros of zeta is countable, there have to exist values of $\phi$ in the uncountable set $[0,\pi)$ for which $\Delta({1\over 2}+i\gamma)\neq e^{2i\phi}$ for all ordinates $\gamma$ of nontrivial zeros; however, we can even not exclude the possibility of $\Delta({1\over 2}+i\gamma)=1$ for a single ordinate $\gamma$.
\par

Denote by $N_\phi(T)$ the number of zeros of the function $\Phi(t)$ with $t\in(0,T]$, then
$$
N_\phi(T)=N_0(T)+N_\phi^\Delta(T),
$$
where $N_0(T)$ counts the nontrivial zeros of $\zeta(s)$ on the critical line with imaginary part in $(0,T]$ and $N_\phi^\Delta(T)$ is the number of roots of the equation $\Delta({1\over 2}+it)=e^{i\phi}$ with $t\in(0,T]$. All these functions are counting according multiplicities; it is easy to see that all roots counted by $N_\phi^\Delta$ are simple (see (\ref{delt})). By a deep result of Conrey \cite{conre} (building on work of Levinson \cite{levi}), more than two fifths of the zeros of $\zeta(s)$ lie on the critical line,
\begin{equation}\label{peter}
N_0(T)\geq {2\over 5}N(T)={T\over 5\pi}\log T+O(T),
\end{equation}
where $N(T)={T\over 2\pi}\log{T\over 2\pi e}+O(\log T)$ is the Riemann-von Mangoldt counting function for nontrivial zeros without specifying the real part (for this and for other basics from zeta-function theory we refer to Ivi\'c \cite{ivic}). First of all, we shall prove an asymptotic formula for the counting function $N_\phi^\Delta(T)$:

\begin{Theorem}\label{one}
For any $\phi\in[0,\pi)$, as $T\to\infty$,
$$
N_\phi^\Delta(T)={T\over 2\pi}\log{T\over 2\pi e}+O(\log T);
$$
in particular, the set $\zeta({1\over 2}+i\R)\cap e^{i\phi}\R$ is countable.
\end{Theorem}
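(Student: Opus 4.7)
The idea is to reduce $\Delta(\tfrac{1}{2}+it)=e^{i\phi}$ to a real congruence for the Riemann--Siegel theta function and then count crossings. Since $|\Delta(\tfrac{1}{2}+it)|=1$ (as already noted in the text) and $\Delta(\tfrac{1}{2}+it)$ has no zeros on the real line (neither $\Gamma(1-s)$ nor $\sin(\pi s/2)$ vanishes for $s=\tfrac{1}{2}+it$), one may write $\Delta(\tfrac{1}{2}+it)=e^{-2i\theta(t)}$ for a continuous, real-analytic branch $\theta(t)$ normalized by $\Delta(\tfrac{1}{2})=1$, i.e.\ $\theta(0)=0$. Expanding the $\Gamma$-factor shows that this $\theta$ is the classical Riemann--Siegel theta function $\theta(t)=\arg\Gamma(\tfrac{1}{4}+i\tfrac{t}{2})-\tfrac{t}{2}\log\pi$, so the equation becomes the real condition $\theta(t)\equiv -\phi/2\pmod{\pi}$.

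Next I would apply Stirling's formula to obtain
$$\theta(t)=\frac{t}{2}\log\frac{t}{2\pi}-\frac{t}{2}-\frac{\pi}{8}+O(t^{-1}),\qquad \theta'(t)=\frac{1}{2}\log\frac{t}{2\pi}+O(t^{-2})$$
as $t\to\infty$. From the second expansion there exists $t_{0}>0$ with $\theta'(t)>0$ for all $t>t_{0}$, so $\theta$ is strictly increasing on $(t_{0},\infty)$. The compact interval $(0,t_{0}]$ contributes at most $O(1)$ roots uniformly in $\phi$ and may be absorbed into the error term.

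On $(t_{0},T]$ strict monotonicity turns the counting into integer counting: the number of $t\in(t_{0},T]$ satisfying $\theta(t)\equiv -\phi/2\pmod{\pi}$ equals the number of integers in the interval $((\theta(t_{0})+\phi/2)/\pi,(\theta(T)+\phi/2)/\pi]$, which differs from $(\theta(T)-\theta(t_{0}))/\pi$ by at most $1$. Inserting the Stirling expansion yields
$$N_\phi^\Delta(T)=\frac{T}{2\pi}\log\frac{T}{2\pi e}+O(1),$$
already sharper than the stated $O(\log T)$. Simplicity of each root is automatic from $\theta'(t)\neq 0$, and countability of $\zeta(\tfrac{1}{2}+i\R)\cap e^{i\phi}\R$ follows from the decomposition $N_\phi(T)=N_0(T)+N_\phi^\Delta(T)$ being finite for every $T$. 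The step I expect to require the most bookkeeping---the main obstacle---is the very first one: producing a globally continuous branch of $\theta$ on all of $\R$ and verifying the identity $\Delta(\tfrac{1}{2}+it)=e^{-2i\theta(t)}$ rigorously via the asymmetric functional equation (\ref{feq}) together with the reflection and duplication formulae for $\Gamma$; everything downstream is routine Stirling asymptotics.
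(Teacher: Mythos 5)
Your argument is correct, and it is a genuinely cleaner implementation of the same underlying idea as the paper's proof. Both proofs ultimately measure the total change of the argument of $\Delta(\tfrac12+it)$ along the critical line: the paper identifies $N_\phi^\Delta(T)$ (up to sign) with the winding number of the curve $\lambda\mapsto\Delta(\tfrac12+i\lambda T)$, writes it as $\frac{T}{2\pi}\int_0^1\frac{\Delta'}{\Delta}(\tfrac12+i\lambda T)\,{\rm d}\lambda$, evaluates this with the asymptotics (\ref{delt}), and first imposes the normalization $\Delta(\tfrac12+iT)=1$ as in (\ref{condit}), removing it afterwards at the cost of the $O(\log T)$ error. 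You instead parametrize $\Delta(\tfrac12+it)=e^{-2i\theta(t)}$ with the Riemann--Siegel theta function (the identity the paper itself invokes in its Gram-point section via $e^{i\vartheta(t)}=\Delta(\tfrac12+it)^{-1/2}$, and which is equivalent to (\ref{delt}) since $\frac{\Delta'}{\Delta}(\tfrac12+it)=-2\theta'(t)$), and then count solutions of $\theta(t)\equiv{\rm const}\bmod\pi$ using the eventual strict monotonicity of $\theta$. This buys two things: the monotonicity that the paper only uses implicitly (``spinning clockwise with increasing speed'', so that each revolution contributes exactly one crossing) is made explicit, and no normalization of $T$ is needed, so your error term is $O(1)$ rather than $O(\log T)$ --- sharper than the statement requires. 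Two harmless remarks: the paper is inconsistent between $\Delta(\tfrac12+it)=e^{i\phi}$ (definition of $N_\phi^\Delta$) and $e^{2i\phi}$ (the equation actually arising from (\ref{vieh})), so your congruence might need $-\phi$ in place of $-\phi/2$; since counting an arithmetic progression of gap $\pi$ in a long interval is offset-independent up to $O(1)$, this does not affect the asymptotics. Also, your claim that roots are simple from $\theta'(t)\neq 0$ only covers $t>t_0$, but like the paper's corresponding remark this plays no role in the counting.
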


\noindent If the Riemann hypothesis is true (or if almost all zeta zeros lie on the critical line $N(T)\sim N_0(T)$), then Theorem \ref{one} implies
\begin{equation}\label{conseq}
N_\phi^\Delta(T)\sim N_0(T)\qquad\mbox{and}\qquad N_\phi(T)\sim {T\over \pi}\log T;
\end{equation}
unconditionally, it follows that $T\log T$ is the correct order of growth for $N_\phi(T)$.
\par

Moreover, we shall prove asymptotic formulae for the associated first and second discrete moment:

\begin{Theorem}\label{two}
For any $\phi\in[0,\pi)$, as $T\to\infty$,
\begin{equation}\label{vienas}
\sum_{\substack{0<t\leq T\\\zeta({1\over 2}+it)\in e^{i\phi}\sR}}\zeta\left({\textstyle{\frac12}}+it\right)
=2e^{i\phi}\cos\phi\, \frac{T}{2\pi}\log\frac{T}{2\pi e}+O\left(T^{\frac12+\epsilon}\right),
\end{equation}
and
\begin{eqnarray}\label{du}
\sum_{\substack{0<t\leq T\\\zeta({1\over 2}+it)\in e^{i\phi}\sR}}\left|\zeta\left({\textstyle{\frac12}}+it\right)\right|^2&=&\frac{T}{2\pi}\left(\log\frac{T}{2\pi e}\right)^2+(2c+2\cos(2\phi))\frac{T}{2\pi}\log\frac{T}{2\pi e}\nonumber\\
&&+{T\over 2\pi}+O\left(T^{{1\over 2}+\epsilon}\right),
\end{eqnarray}
where $c:=\lim_{N\to\infty}({1\over N}\sum_{n=1}^N{1\over n}-\log N)=0.577\ldots$ is the Euler-Mascheroni constant.
\end{Theorem}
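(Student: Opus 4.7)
The plan is to evaluate both discrete moments by contour integration, exploiting the factorization of $\Phi$ in (\ref{vieh}). Set $F(s) := e^{-2i\phi}\Delta(s)-1$; by that factorization the ordinates $t_n > 0$ with $\zeta({\textstyle{1\over 2}}+it_n) \in e^{i\phi}\sR$ are either zeros of $\zeta$ on the critical line (which contribute $0$ to both sums) or simple roots of the equation $F({\textstyle{1\over 2}}+it)=0$. Form the positively oriented rectangle $\mathcal{R}$ with vertices $1-a+i$, $a+i$, $a+iT$, $1-a+iT$ for some fixed $a \in (1,2)$, shifting $T$ by $O(1/\log T)$ to keep it away from the zeros of $F$. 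By the residue theorem,
\begin{equation*}
\sum_{0<t_n\le T}\zeta({\textstyle{1\over 2}}+it_n) = \frac{1}{2\pi i}\oint_{\mathcal{R}}\zeta(s)\frac{F'(s)}{F(s)}\,ds + O(1),
\end{equation*}
the $O(1)$ absorbing the residue of the integrand at the simple pole of $\zeta$ at $s=1$ together with the bounded contributions from the finitely many $t_n \in (0,1]$. For the second moment one uses the same contour with $\zeta(s)\zeta(1-s)$ in place of $\zeta(s)$, since $\zeta({1\over 2}+it_n)\zeta({1\over 2}-it_n) = |\zeta({1\over 2}+it_n)|^2$ by $\overline{\zeta(s)} = \zeta(\bar{s})$; a short Laurent computation shows that the residue at $s=1$ (now coming from a double pole) still contributes only $O(1)$.

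The horizontal sides of $\mathcal{R}$ contribute $O(T^\epsilon)$ via the convexity bound for $\zeta$ and $|F'/F|\ll\log T$ off the zeros of $F$. On the right vertical side $\Re s = a$, $|\Delta(a+it)| \asymp (|t|/2\pi)^{1/2-a}$ is small by Stirling, so
\begin{equation*}
\frac{F'(s)}{F(s)} = -\sum_{k \ge 1} e^{-2ik\phi}\Delta(s)^{k-1}\Delta'(s);
\end{equation*}
on the left vertical side $\Re s = 1-a$, $|\Delta|$ is large, and using $\Delta(s)\Delta(1-s)=1$ one obtains the complementary expansion
\begin{equation*}
\frac{F'(s)}{F(s)} = \frac{\Delta'(s)}{\Delta(s)} + \sum_{k \ge 1} e^{2ik\phi}\Delta(1-s)^{k-1}\Delta'(1-s).
\end{equation*}
The functional equation $\zeta(s)=\Delta(s)\zeta(1-s)$ then converts the integrand on the left into a factor $\zeta(1-s)$ (resp.\ $\zeta(1-s)^2$) times an explicit expression in $\Delta$, and on both sides $\zeta$ expands as an absolutely convergent Dirichlet series.

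For the first moment the main term arises exclusively from the $k=1$ pieces of the two expansions together with the $n=1$ terms of the Dirichlet series. On the right, integration by parts combined with the Stirling asymptotic $\Delta'/\Delta = -\log(t/2\pi) + O(1/t)$ produces $\int_1^T \log(t/2\pi)\,dt \sim T\log(T/2\pi e)$ with coefficient $1$ (from $\zeta(a+it) = 1 + O(2^{-a})$); on the left, the corresponding contribution carries an additional factor $e^{2i\phi}$ from the leading term of the geometric expansion. Summing,
$$
(1 + e^{2i\phi})\frac{T}{2\pi}\log\frac{T}{2\pi e} = 2e^{i\phi}\cos\phi\cdot\frac{T}{2\pi}\log\frac{T}{2\pi e},
$$
which is (\ref{vienas}). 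The remaining contributions ($n\geq 2$ or $k\geq 2$) are oscillatory integrals whose phase derivative $\log n \pm \log(t/2\pi)$ admits no stationary point in the bulk $t \in (1,T)$, so that integration by parts absorbs them into the error $O(T^{1/2+\epsilon})$.

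For the second moment the analysis is structurally the same but richer. The leading $\frac{T}{2\pi}(\log(T/2\pi e))^2$ term in (\ref{du}) arises from the $\Delta'/\Delta$ piece on the left: the integral $\int \zeta(1-s)^2 \Delta'(s)\,ds$, with $\zeta(1-s)^2 = \sum d(m)\, m^{s-1}$, has for each $m\ge 1$ a Riemann--Siegel-type saddle at $t = 2\pi m$, and summing the saddle values yields $2\pi \sum_{m\leq T/2\pi}d(m)\log m \sim T\log^2(T/2\pi)$. The coefficient $2\cos(2\phi) = e^{2i\phi}+e^{-2i\phi}$ of $\log(T/2\pi e)$ is produced by the $k=1$ contributions on the right (with factor $e^{-2i\phi}$) and on the left (with factor $e^{2i\phi}$), handled exactly as in the first-moment case but with $\zeta$ replaced by $\zeta^2$ on the right and by $\zeta(1-\cdot)^2$ on the left. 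The remaining $2c\log(T/2\pi e)$ and $T/(2\pi)$ summands emerge from the next-order Stirling corrections, the secondary saddle-point terms, and the Laurent expansion of $\zeta$ at $s=1$. The principal technical obstacle will be the rigorous $O(T^{1/2+\epsilon})$ control of all non-main oscillatory integrals: this requires the Hardy--Littlewood fourth-moment estimate $\int_0^T |\zeta({1\over 2}+it)|^4\,dt \ll T\log^4 T$ combined with uniform saddle-point bounds (in the spirit of the Riemann--Siegel formula) on integrals of the form $\int n^{\pm it}\Delta(s)^k\,dt$.
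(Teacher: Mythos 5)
Your overall strategy is the same as the paper's: integrate $\zeta(s)F'(s)/F(s)$ (resp.\ $\zeta(s)\zeta(1-s)F'(s)/F(s)$) over a rectangle straddling the critical strip, expand $F'/F$ geometrically in powers of $\Delta(s)$ on the right edge and of $\Delta(s)^{-1}=\Delta(1-s)$ on the left edge, and use the functional equation on the left. However, your bookkeeping of the first-moment main term contains a genuine error. On the right edge $\Re s=a$ the $k=1$ term of your expansion is $-e^{-2i\phi}\Delta'(s)$, not $-e^{-2i\phi}\Delta'(s)/\Delta(s)$; since $|\Delta(a+it)|\asymp (t/2\pi)^{1/2-a}$ is small and oscillating, the $n=1$ contribution is $-e^{-2i\phi}\bigl(\Delta(a+iT)-\Delta(a+i)\bigr)/(2\pi i)=O(1)$, so the right edge yields no main term at all for the first moment (it does for the second moment precisely because the extra factor $\zeta(1-s)=\zeta(s)/\Delta(s)$ cancels the small $\Delta(s)$ and leaves $\zeta(s)^2\,\Delta'/\Delta$); you appear to have replaced $\Delta'$ by $\Delta'/\Delta$ there. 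The coefficient-$1$ part of $(1+e^{2i\phi})\frac{T}{2\pi}\log\frac{T}{2\pi e}$ in fact comes from the left-edge $\Delta'/\Delta$ piece, i.e.\ from $\int\zeta(1-s)\Delta'(s)\,\d s$ with the \emph{full} Dirichlet series: each term $n^{s-1}\Delta'(s)$ has phase derivative $\log n-\log(t/2\pi)$, which vanishes at $t=2\pi n$ for every $n\le T/2\pi$, and it is the sum of all these saddle contributions (this is exactly what Gonek's Lemma \ref{gonekl} evaluates, giving $\sum_{n\le \tau/2\pi}1+O(\tau^{1/2+\epsilon})$) that produces $\frac{T}{2\pi}\log\frac{T}{2\pi e}$. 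Your assertion that every term with $n\ge 2$ has ``no stationary point in the bulk'' is false for the minus sign of the phase, and it contradicts your own (correct) treatment of the second moment, where you do let the saddles at $t=2\pi m$ of $\int\zeta(1-s)^2\Delta'(s)\,\d s$ generate the leading $\frac{T}{2\pi}(\log)^2$ term. So although your final total $(1+e^{2i\phi})$ is the right answer, the derivation as written misattributes one of the two main terms and simultaneously discards the terms that actually produce it.

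Beyond this, the quantitative core of the argument --- a uniform evaluation, with error $O(T^{1/2+\epsilon})$, of integrals of the type $\frac{1}{2\pi i}\int_{a+i}^{a+iT}\Delta(1-s)\sum_n a(n)n^{-s}\,\d s$ --- is exactly what you defer as ``the principal technical obstacle''; the paper imports it ready-made as Gonek's lemma (Lemma 1 of Conrey--Ghosh--Gonek together with Lemma 4 of Gonek), and with it no Hardy--Littlewood fourth-moment bound is needed anywhere. Smaller points: your claimed $O(T^\epsilon)$ for the horizontal edges is not available (the convexity bound (\ref{ioi}) near $\Re s=1-a$ only gives $T^{1/2+\epsilon}$, which, however, suffices, provided $T$ is chosen as in (\ref{condi}) so that $|\Delta(\frac12+iT)-e^{2i\phi}|\gg 1$); the pole of $\zeta$ at $s=1$ lies outside your rectangle, whose lower edge is at height $1$, so no residue computation at $s=1$ is needed; and the constants $2c$ and $\frac{T}{2\pi}$ in (\ref{du}) do not come from ``Stirling corrections'' or a Laurent expansion at $s=1$, but from the divisor-sum asymptotics $\sum_{mn\le x}1=x\log x+(2c-1)x+O(x^{1/2+\epsilon})$ fed through the partial integration against $\log\frac{\tau}{2\pi}$, plus the elementary rewriting of $\bigl(\log\frac{T}{2\pi}\bigr)^2$ in terms of $\log\frac{T}{2\pi e}$.
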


\noindent Note that the leading main term of the second moment is independent of $\phi$ whereas the first one vanishes for $\phi={\pi\over 2}$. These asymptotic formulae provide interesting information for the value-distribution of the zeta-function on the critical line. As an immediate consequence of (\ref{conseq}) in combination with (\ref{vienas}) we note

\begin{Corollary}\label{drei}
For $\phi\in[0,\pi)$,
\begin{eqnarray*}
e^{i\phi}\cos\phi &\leq & \liminf_{T\to\infty}{1\over N_\phi(T)}\sum_{\substack{0<t\leq T\\\zeta({1\over 2}+it)\in e^{i\phi}\sR}}\zeta\left({\textstyle{\frac12}}+it\right)\\
&\leq & \limsup_{T\to\infty}{1\over N_\phi(T)}\sum_{\substack{0<t\leq T\\\zeta({1\over 2}+it)\in e^{i\phi}\sR}}\zeta\left({\textstyle{\frac12}}+it\right)
\leq {\textstyle{10\over 7}}e^{i\phi}\cos\phi.
\end{eqnarray*}
If Riemann's hypothesis is true (or if almost all zeta zeros lie on the critical line), then the mean-value of the points in $\zeta({1\over 2}+i\R)\cap e^{i\phi}\R$ exists and is equal to
$$
\lim_{T\to\infty}{1\over N_\phi(T)}\sum_{\substack{0<t\leq T\\\zeta({1\over 2}+it)\in e^{i\phi}\sR}}\zeta\left({\textstyle{\frac12}}+it\right)=e^{i\phi}\cos\phi;
$$
for purely imaginary values the statement is unconditionally true.
\end{Corollary}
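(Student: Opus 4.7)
The plan is to derive the corollary directly from Theorems \ref{one} and \ref{two}, the decomposition $N_\phi(T)=N_0(T)+N_\phi^\Delta(T)$, and Conrey's lower bound (\ref{peter}). Setting $L(T):={T\over 2\pi}\log{T\over 2\pi e}$, Theorem \ref{one} and the Riemann--von Mangoldt formula yield $N_\phi^\Delta(T)=L(T)+O(\log T)$ and $N(T)=L(T)+O(\log T)$. Combining the trivial bound $N_0(T)\leq N(T)$ with $N_0(T)\geq{2\over 5}N(T)={2\over 5}L(T)+O(T)$, I would deduce
$$
{\textstyle{7\over 5}}L(T)+O(T)\;\leq\;N_\phi(T)\;\leq\;2L(T)+O(\log T).
$$

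Next, I would invoke equation (\ref{vienas}) of Theorem \ref{two}, which gives
$$
S(T):=\sum_{\substack{0<t\leq T\\\zeta({1\over 2}+it)\in e^{i\phi}\sR}}\zeta({\textstyle{1\over 2}}+it)=2e^{i\phi}\cos\phi\cdot L(T)+O(T^{{1\over 2}+\epsilon}).
$$
Since each summand lies in $e^{i\phi}\R$, so does $S(T)$; dividing by the unimodular factor $e^{i\phi}$ reduces the complex inequalities of the corollary to inequalities between real numbers. Inserting the two extremal asymptotics for $N_\phi(T)$ into $S(T)/N_\phi(T)$ then produces the lower bound $e^{i\phi}\cos\phi$ (when $N_\phi(T)\sim 2L(T)$) and the upper bound ${10\over 7}e^{i\phi}\cos\phi$ (when $N_\phi(T)\sim{7\over 5}L(T)$), exactly as stated.

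For the conditional mean-value, the hypothesis $N_0(T)\sim N(T)$ (implied by the Riemann hypothesis) yields $N_\phi(T)\sim 2L(T)$, collapsing the two-sided estimate to the exact equality $\lim S(T)/N_\phi(T)=e^{i\phi}\cos\phi$. For the unconditional purely imaginary case $\phi={\pi\over 2}$, the factor $\cos\phi=0$ annihilates the main term of (\ref{vienas}), so that $S(T)=O(T^{{1\over 2}+\epsilon})$, while $N_\phi(T)\geq N_\phi^\Delta(T)\gg T\log T$ by Theorem \ref{one}; hence $S(T)/N_\phi(T)=O(T^{-{1\over 2}+\epsilon})=o(1)$. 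I anticipate no genuine obstacle here, since the corollary is a purely arithmetic consequence of the two main theorems already in hand; the only subtlety worth flagging is the interpretation of the complex-valued inequalities, which is settled by the observation that every summand in $S(T)$ shares the common direction $e^{i\phi}$.
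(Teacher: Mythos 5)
Your proposal is correct and follows essentially the same route as the paper: the conditional statement and the purely imaginary case come directly from (\ref{conseq}), the bound $N_\phi(T)\geq N_\phi^\Delta(T)$ and (\ref{vienas}), while the unconditional $\liminf$/$\limsup$ bounds come from sandwiching $N_\phi(T)=N_0(T)+N_\phi^\Delta(T)$ between ${7\over 5}L(T)$ and $2L(T)$ via Conrey's estimate (\ref{peter}) and Theorem \ref{one}. Your remark that every summand shares the direction $e^{i\phi}$, so the complex inequalities reduce to real ones after dividing by $e^{i\phi}$, is exactly the (implicit) interpretation the paper uses.
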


\noindent The case of purely imaginary values is special because formula (\ref{vienas}) gives for $\phi={\pi\over 2}$ just an upper bound for the first moment which leads to mean-value zero. For the real values of the zeta-function on the critical line the mean-value equals one provided Riemann's hypothesis is true. For any value of $\phi$, the real part of the according mean-value is non-negative (explaining Edwards' observation from above). These different mean-values according to $\phi$ are well reflected in the graph of the curve $t\mapsto\zeta({1\over 2}+it)$, see Figure 1 below: the grey circle stands for the means $e^{i\phi}\cos\phi$ of the values $\zeta({1\over 2}+i\R)$ on the straight line $e^{i\phi}\R$ as $\phi$ varies; the symmetry with respect to the real axis and the tendency for $\Re \zeta({1\over 2}+it)$ to be positive are nicely explained by (\ref{vienas}) and (\ref{du}).
\begin{figure}[ht]
\includegraphics[scale=.75]{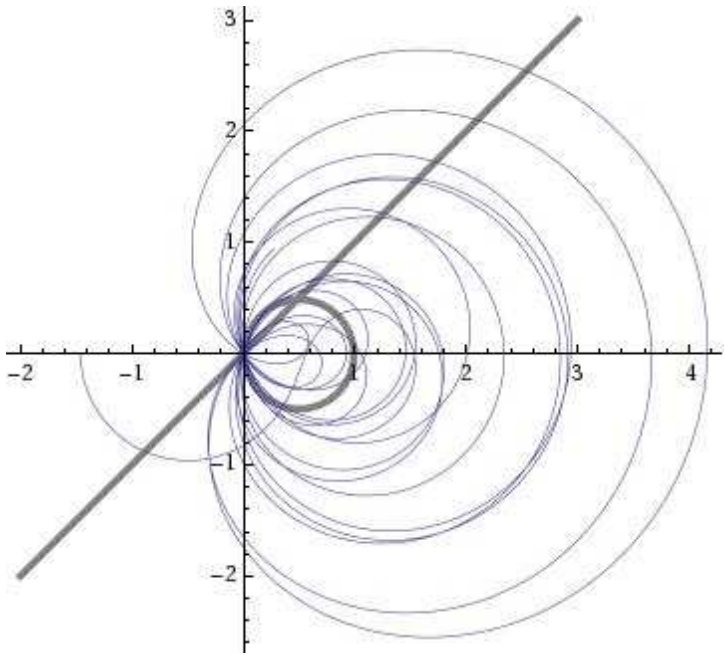}
\caption{The curve $t\mapsto\zeta({1\over 2}+it)$ for $t\in[0,70]$ in addition with the circle of the mean-values and the bisecting line of the first and third quadrant ($\phi={\pi\over 4}$); here the mean-value is ${1\over 2}(1+i)$, the intersection point of the grey circle with the bisecting line.}
\end{figure}

Our next application shows that the zeta-function takes arbitrarily large values on any half-line $e^{i\phi}\R$ through the origin in the right half-plane.

\begin{Corollary}\label{vier}
For any $\phi\in[0,\pi)$ there exist infinitely many real values $t\to\infty$ such that 
$$
e^{-i\phi}\zeta({\textstyle{1\over 2}}+it)\geq (\log t)^{1\over 2};
$$
more precisely, any interval $(T,2T]$ with sufficiently large $T$ contains such a value $t$. In the purely imaginary case ($\phi={\pi\over 2}$) there exist also values $t\to\ +\infty$ such that
$$
i\zeta({\textstyle{1\over 2}}+it)\geq (\log t)^{1\over 2}.
$$
\end{Corollary}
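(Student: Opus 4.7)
The plan is to localize the second-moment formula~(\ref{du}) and the counting formula from Theorem~\ref{one} to a dyadic window $(T,2T]$, and then apply Cauchy--Schwarz to force at least one intersection point in that window to carry a value of modulus $\geq \sqrt{\log T}$. The first moment~(\ref{vienas}) will then be called upon to track the sign.

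Subtracting (\ref{du}) applied to $(0,T]$ from the same formula applied to $(0,2T]$ and expanding the logarithms yields
$$
\sum_{\substack{T<t\leq 2T\\ \zeta({1\over 2}+it)\in e^{i\phi}\sR}} \bigl|\zeta({\textstyle{1\over 2}}+it)\bigr|^2 = (1+o(1))\,\frac{T}{2\pi}(\log T)^2,
$$
and the same dyadic subtraction applied to Theorem~\ref{one} gives that the number of such $t$ in $(T,2T]$ is $(1+o(1))\,T(\log T)/(2\pi)$. At every intersection point one has $|\zeta({1\over 2}+it)|^2 = (e^{-i\phi}\zeta({1\over 2}+it))^2$, so the pigeonhole / max-$\geq$-average principle forces
$$
\max_{\substack{T<t\leq 2T\\ \zeta({1\over 2}+it)\in e^{i\phi}\sR}} \bigl(e^{-i\phi}\zeta({\textstyle{1\over 2}}+it)\bigr)^2 \;\geq\; (1+o(1))\log T,
$$
producing some $t_T\in(T,2T]$ with $|e^{-i\phi}\zeta({1\over 2}+it_T)|\geq \sqrt{\log T}(1-o(1))\geq \sqrt{\log t_T}$ for $T$ large enough (since $\log t_T\leq \log T+\log 2$).

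To promote the modulus bound to the signed inequality one combines (\ref{vienas}) with the quadratic information above. For $\phi\ne\pi/2$ the first moment is of size $T\log T$ with the definite sign of $\cos\phi$, and one runs a test function of the form $a_t(a_t-\sqrt{\log T})$ against the two moment asymptotics to rule out that all intersection values stay below $\sqrt{\log T}$ on the favoured side of zero; the negative and moderate-positive contributions are dominated by the bulk of $\sum a_t^2$, forcing at least one $a_t=e^{-i\phi}\zeta({1\over 2}+it)$ to exceed $\sqrt{\log T}$ positively. For the critical $\phi=\pi/2$ the leading term of (\ref{vienas}) vanishes, so one splits the second-moment sum into its contributions from $-i\zeta({1\over 2}+it)>0$ and $<0$; because $|\sum -i\zeta({1\over 2}+it)|=O(T^{1/2+\epsilon})$ is too small to absorb the main term of (\ref{du}), neither half can be negligible and each must be $\sim \frac{1}{2}\frac{T}{2\pi}(\log T)^2$, whence the Cauchy--Schwarz step applied to each half delivers the required large values of \emph{both} $+i\zeta$ and $-i\zeta$.

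The main obstacle is exactly this sign control. The Cauchy--Schwarz step cleanly produces only $|e^{-i\phi}\zeta({1\over 2}+it)|\geq \sqrt{\log T}$, and extracting positivity of the signed real quantity demands a careful interplay between the first moment (which pins down the bulk drift) and the second moment (which controls the spread), together with the parity-type argument just described in the degenerate case $\phi=\pi/2$.
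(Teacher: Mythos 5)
The unsigned half of your proposal is exactly the paper's proof: dyadic differencing of Theorem \ref{one} and of (\ref{du}) gives $\sim\frac{T}{2\pi}\log T$ intersection points in $(T,2T]$ carrying second moment $(1+o(1))\frac{T}{2\pi}(\log T)^2$, and max-versus-average produces a point with $\vert\zeta({\textstyle\frac12}+it)\vert$ of size $(\log T)^{1/2}$. (Your passing inequality $(1-o(1))\sqrt{\log T}\geq\sqrt{\log t_T}$ is not literally right, since $\log t_T$ can be as large as $\log T+\log 2$; the paper allows itself the same harmless imprecision, so I set this aside.)

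The genuine gap is the sign step, which you correctly single out as the main obstacle but do not actually close. Writing $a_t:=e^{-i\phi}\zeta({\textstyle\frac12}+it)$, your test function gives $\sum_t a_t(a_t-\sqrt{\log T})=\Sigma-\sqrt{\log T}\,S_1$, and for \emph{negative} $a_t$ the summand $a_t(a_t-\sqrt{\log T})$ is \emph{positive}; hence positivity of this sum does not force any $a_t\geq\sqrt{\log T}$, and the claim that the ``negative and moderate-positive contributions are dominated by the bulk of $\sum a_t^2$'' is a non sequitur without control of $\sum_{a_t<0}a_t^2$, which the two moments do not supply. Concretely (say $\phi=0$), about $\frac{T}{2\pi}$ points with $a_t\approx-\log T$ together with the remaining $\sim\frac{T}{2\pi}\log T$ points at $a_t\approx 3$ reproduce both the first-moment size $\approx 2\cdot\frac{T}{2\pi}\log T$ and the second-moment size $\approx\frac{T}{2\pi}(\log T)^2$, yet no value exceeds $3$: so the count plus the two moment asymptotics alone cannot yield large \emph{positive} values, and any argument pretending otherwise must fail somewhere. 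Your $\phi=\pi/2$ argument fails for the same reason: $\vert\sum_t a_t\vert=O(T^{1/2+\epsilon})$ forces near-cancellation of the \emph{first}-moment mass on the two sides of zero, not an even split of the \emph{second}-moment mass, so ``each half must be $\sim\frac12\frac{T}{2\pi}(\log T)^2$'' does not follow (in the configuration above almost all of (\ref{du}) sits on one side). For comparison, the paper extracts the sign by a one-sentence appeal of the very kind you are reaching for — if all the large values were negative this would contradict (\ref{vienas}), and (\ref{uui}) is invoked for both signs when $\phi=\pi/2$ — without the intermediate quantitative claims; your attempt to make that appeal precise through the quadratic test function and the even split is where your write-up breaks down and would need additional input beyond the three asymptotics you use.
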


\noindent  In particular, $\zeta({1\over 2}+it)$ assumes arbitrarily large real values. To the best knowledge of the authors the various $\Omega$-results for the zeta-function in the literature so far (e.g. Soundararajan \cite{sou}) do not imply this corollary.
\par

Finally, we give a lower bound for the number of roots of $\Delta({1\over 2}+it)-e^{2i\phi}$ distinct from ordinates of zeta zeros:

\begin{Corollary}\label{funf}
For $\phi\neq {\pi\over 2}$, as $T\to\infty$,
$$
\sum_{\substack{0<t\leq T\\\Delta({1\over 2}+it)=e^{2i\phi}, \zeta({1\over 2}+it)\neq 0}}1\geq {\textstyle{2(\cos\phi)^2\over \pi}}T.
$$
\end{Corollary}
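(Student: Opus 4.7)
My plan for Corollary \ref{funf} is to combine the two moment asymptotics of Theorem \ref{two} via the Cauchy--Schwarz inequality. Let $S = \{t \in (0,T] : \zeta(\tfrac{1}{2}+it) \in e^{i\phi}\sR\}$ and split $S = S_0 \cup S_1$, where $S_0$ consists of those $t \in S$ with $\zeta(\tfrac{1}{2}+it) = 0$ and $S_1 = S \setminus S_0$; the quantity we wish to bound from below is exactly $|S_1|$. The crucial observation is that each $t \in S_0$ contributes $0$ to both the first and the second discrete moment, so the asymptotic identities (\ref{vienas}) and (\ref{du}) remain valid verbatim with the summation restricted to $S_1$.

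Cauchy--Schwarz applied to the first moment over $S_1$ then yields
$$|S_1| \;\geq\; \frac{\bigl|\sum_{t \in S_1} \zeta(\tfrac{1}{2}+it)\bigr|^2}{\sum_{t \in S_1} |\zeta(\tfrac{1}{2}+it)|^2}.$$
Substituting (\ref{vienas}) and (\ref{du}), the numerator becomes
$$4\cos^2\phi \cdot \Bigl(\frac{T}{2\pi}\Bigr)^2 \Bigl(\log\frac{T}{2\pi e}\Bigr)^2 + O\bigl(T^{3/2+\epsilon}\log T\bigr),$$
while the denominator is $\frac{T}{2\pi}\bigl(\log\frac{T}{2\pi e}\bigr)^2 + O(T\log T)$. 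Dividing, the quotient equals $\frac{2\cos^2\phi}{\pi}\,T + O(T/\log T)$ as $T \to \infty$, which is the required lower bound as soon as $\cos\phi \neq 0$.

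The only real obstacle is the hypothesis $\phi \neq \pi/2$: when $\cos\phi = 0$ the leading term of the first moment in (\ref{vienas}) vanishes, and this Cauchy--Schwarz approach collapses to a bound that is useless against the linear growth on the right-hand side. Otherwise the proof reduces to substituting the two moment formulas and comparing dominant orders of growth, so no further technical work is needed beyond what is already embedded in Theorem \ref{two}. It is worth noting that the constant $2/\pi$ produced by this argument is almost certainly not sharp: under the Riemann hypothesis the conditional estimate (\ref{conseq}) shows that $|S_1|$ is in fact of order $T\log T$, so any asymptotic (rather than Cauchy--Schwarz) handle on the first moment over $S_1$ would yield a substantially stronger bound.
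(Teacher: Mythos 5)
Your argument is correct and is essentially the paper's own proof: the paper also applies Cauchy--Schwarz to the first moment, with the counting restricted to the points where $\zeta(\frac12+it)\neq 0$ (which is legitimate precisely because the zeros contribute nothing to either moment, as you note explicitly), and then inserts the two asymptotic formulae of Theorem \ref{two}. Your computation of the resulting quotient, $4\cos^2\phi\cdot\frac{T}{2\pi}(1+O(1/\log T))=\frac{2(\cos\phi)^2}{\pi}T+O(T/\log T)$, and your remark on why $\phi=\frac{\pi}{2}$ must be excluded match the paper's reasoning.
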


\noindent For $\phi={\pi\over 2}$, the case of purely imaginary values we do not have any non-trivial lower bound. It has been conjectured by Fujii \cite{fuj} that the values ${1\over 2\pi i}\log\Delta({1\over 2}+i\gamma_n)$ are uniformly distributed modulo one (in the sense of Weyl), where $\gamma_n$ denotes the $n$-th ordinate of the nontrivial zeros of $\zeta(s)$ in ascending order. A proof of this conjecture would certainly yield more information about the quantity estimated in the last corollary.
\medskip

The remaining parts of this article are organized as follows. The next two sections contain the proofs of Theorem \ref{one} and Theorem \ref{two}. For the sake of completeness we give the proofs of the corollaries in the fourth section. In the last but one section we discuss the special case $\phi=0$ and how our results provide new information about Gram points. Finally, we conclude with some remarks on the shape of the curve $t\mapsto\zeta({1\over 2}+it)$ and further applications of our method. 

\section{Proof of Theorem \ref{one}}

Recall that $\Delta({1\over 2}+it)$ is a complex number from the unit circle whenever $t\in\R$. Moreover, $\Delta'({\textstyle{1\over 2}}+it)$ is non-vansishing, which follows from the asymptotic formula
\begin{equation}\label{delt}
{\Delta'\over \Delta}(\sigma+it)=-\log{\vert t\vert\over 2\pi}+O(\vert t\vert^{-1})\qquad\mbox{for}\quad \vert t\vert\geq 1.
\end{equation}
Consequently, $\Delta({1\over 2}+it)$ is spinning on the unit circle around the origin in clockwise direction with increasing speed as $t\to\infty$. Moreover, it follows that there exists no proper real interval ${\mathcal I}$ such that $\zeta({1\over 2}+it)$ lies on a straight line $e^{i\phi}\R$ for all $t\in{\mathcal I}$. For the first, lets assume that 
\begin{equation}\label{condit}
\Delta({\textstyle{1\over 2}}+iT)=\Delta({\textstyle{1\over 2}})=1. 
\end{equation}
Then the number of roots of the equation $\Delta({1\over 2}+it)=e^{2i\phi}$ with $0\leq t\leq T$ is up to the sign equal to the winding number of the curve
$$
\eta\,:\, [0,1]\to \C,\quad \lambda\mapsto \eta(\lambda):=\Delta({\textstyle{1\over 2}}+i\lambda T).
$$
This yields
$$
-N_\phi^\Delta(T)={1\over 2\pi i}\int_\eta {\d s\over s}={T\over 2\pi}\int_0^1 {\Delta'\over \Delta}({\textstyle{1\over 2}}+i\lambda T)\d \lambda.
$$
In order to use (\ref{delt}) we divide the integration interval into two subintervals. Noting that there are only finitely many roots of $\Delta({1\over 2}+it)-e^{2i\phi}$ for $0<t\leq 1$, we find for the term with the integral on the right-hand side above
\begin{eqnarray*}
\lefteqn{{T\over 2\pi}\left\{\int_0^{1/T}+\int_{1/T}^1\right\}{\Delta'\over \Delta}({\textstyle{1\over 2}}+i\lambda T)\d \lambda}\\
&&=O(1)+{T\over 2\pi}\int_{1/T}^1\left(-\log{\lambda T\over 2\pi}+O((\lambda T)^{-1})\right)\d\lambda.
\end{eqnarray*}
Hence, the asymptotic formula for $N_\phi^\Delta(T)$ follows by integration; however, to get rid of our assumption (\ref{condit}) on $T$, by (\ref{delt}) we may substitute this by any $T$ at the expense of an error $O(\log T)$. This proves Theorem \ref{one}.

\section{Proof of Theorem \ref{two}}

The proof relies on a new variation of the method of Conrey, Ghosh \& Gonek (see, e.g., \cite{cgg}); for our purpose we shall work with the logarithmic derivative of $\Delta(s)-e^{2i\phi}$, before only logarithmic derivatives of Dirichlet series were treated.
\par\medskip

\subsection{Proof of the first moment} Since the nontrivial zeros of the zeta-function do not contribute to the sum in question, we only have to take the roots of $\Delta({1\over 2}+it)-e^{2i\phi}$ into account. Denoting those roots by $t_n^\phi$ in ascending order $0< t_1^\phi\leq t_2^\phi\leq \ldots$, we thus have
$$
\sum_{\substack{0<t\leq T\\\zeta({1\over 2}+it)\in e^{i\phi}\sR}}\zeta\left({\textstyle{\frac12}}+it\right)
=\sum_{\substack{0<t_n^\phi\leq T}}\zeta({\textstyle{\frac12}}+it).
$$
In view of Theorem \ref{one} the sequence of these $t_n^\phi$ cannot lie too dense. As a matter of fact, for any given $T_0$ there exists $T\in[T_0,T_0+1)$ such that
\begin{equation}\label{condi}
\min_{t_n^\phi}\vert T-t_n^\phi\vert\geq {1\over \log T},
\end{equation}
where the minimum is taken over all $t_n^\phi$. For $\vert t\vert\geq 10$, one has $\vert\Delta(s)\vert=1$ if and only if $\Re s={1\over 2}$ (see Spira \cite{spira}, resp. Dixon \& Schoenfeld \cite{ds} for a slight improvement). For smaller values of $t$ there are roots off the critical line, however, they from a sparse set. Hence, we may assume that there are no roots of $\Delta(s)-e^{2i\phi}$ on the rectangular contour ${\mathcal C}$ with corners $2+i,2+iT,1-a+iT,1-a+i$ in counterclockwise direction, where $a=1+\frac{1}{\log T}$ (if there are roots on the contour we make small indentions at the expense of a bounded error). By the calculus of residues,
\begin{eqnarray*}
\lefteqn{\sum_{0<t_n^\phi\leq T}\zeta({\textstyle{\frac12}}+it)}\\
&=&\frac{1}{2\pi  i}\left\{\int_{2+i}^{2+iT}+\int_{2+iT}^{1-a+iT}+\int_{1-a+iT}^{1-a+i}+\int_{1-a+i}^{2+i}\right\} \zeta(s)\frac{\Delta'(s)}{\Delta(s)-e^{2i\phi}}\d s+O(1)\\
&=&\sum_{i=1}^{4}\mathcal{I}_i+O(1),
\end{eqnarray*}
say. The bounded error term $O(1)$ results from at most finitely many possible residues outside ${\mathcal C}$, resp. contributions $\zeta({1\over 2}+it_n^\phi)$ with $0<t_n\leq 1$.
\par

First we consider $\mathcal{I}_1$. It is well-known that
\begin{equation}\label{L2}
\Delta(\sigma+it)=\left({\vert t\vert\over 2\pi}\right)^{{\textstyle{1\over 2}}-\sigma-it}\exp(i(t+{\textstyle{\pi\over 4}}))(1+O(\vert t\vert^{-1}))\qquad\mbox{for}\quad \vert t\vert\geq 1
\end{equation}
uniformly for any $\sigma$ from a bounded interval. Hence,
\begin{equation}\label{star}
{1\over \Delta(s)-e^{2i\phi}}={-e^{-2i\phi}\over 1-e^{-2i\phi}\Delta(s)}=-e^{-2i\phi}\left(1+\sum_{k=1}^\infty e^{-2ki\phi}\Delta(s)^k\right);
\end{equation}
in view of (\ref{L2}) the infinite geometric series on the right is converging very quickly for $s\in 2+i\R$. Writing $\Delta'(s)=\Delta(s){\Delta'\over \Delta}(s)$ and using (\ref{L2}) in combination with (\ref{delt}), we easily find
$$
{1\over 2\pi i}\int_{2+i}^{2+iT}\zeta(s)\Delta'(s)\sum_{k=1}^\infty e^{-2ki\phi}\Delta(s)^k\d s\ll 1.
$$
The same trick leads to 
$$
{1\over 2\pi i}\int_{2+i}^{2+iT}\zeta(s)\Delta'(s)\d s\ll 1,
$$
which implies $\mathcal{I}_1\ll 1$. The horizontal integral $\mathcal{I}_4$ is independent of $T$, so $\mathcal{I}_4\ll 1$. For $\mathcal{I}_2$ we notice that
the denominator $\Delta(s)-e^{i\phi}$ of the integrand is off the critical line is either dominated by $\Delta(s)$ for $\Re s<{1\over 2}$ or by $e^{i\phi}$ if $\Re s>{1\over 2}$. On the critical line we have 
$$
\Delta({\textstyle{1\over 2}}+iT)-e^{i\phi}=\Delta({\textstyle{1\over 2}}+iT)-\Delta({\textstyle{1\over 2}}+it_m^\phi)
$$
for some integer $m$. It follows from (\ref{L2}) and (\ref{condi}) that
$$
\Delta({\textstyle{1\over 2}}+iT)-e^{i\phi}\gg \vert T-t_m^\phi\vert \log T\gg 1.
$$
Now using (\ref{delt}) in combination with
\begin{equation}\label{ioi}
\zeta(\sigma+it)\ll 1+\vert t\vert^{{1\over 2}(1-\sigma)+\epsilon}\qquad\mbox{for}\quad 1-a\leq \sigma\leq 2,\ \vert t\vert\geq 1,
\end{equation}
we may deduce $\mathcal{I}_2\ll T^{\frac12+\epsilon}$.
\par

The main term of the asymptotic formula comes from the integral $\mathcal{I}_3$ which we have to evaluate now. Via $s\mapsto 1-\overline{s}$ we find
$$
\mathcal{I}_3=-\frac{1}{2\pi  i}\int_{a+i}^{a+iT}\zeta(1-\overline{s}) \frac{\Delta'(1-\overline{s})}{\Delta(1-\overline{s})-e^{2i\phi}}\d s.
$$
By the reflection principle,
$$
\overline{\mathcal{I}_3}=-\frac{1}{2\pi i}\int_{a+i}^{a+iT}\zeta(1-s)\frac{\Delta'(1-s)}{\Delta(1-s)-e^{-2i\phi}}\d s.
$$
In view of the functional equation (\ref{feq}) the integrand can be rewritten as
$$
\zeta(1-s)\frac{\Delta'(1-s)}{\Delta(1-s)-e^{-2i\phi}}=\zeta(s)\Delta'(1-s){1\over 1-e^{-2i\phi}\Delta(1-s)^{-1}}.
$$
Note that $\Delta(1-s)^{-1}=\Delta(s)$ (again by (\ref{feq})). With regard to (\ref{L2}) this function is small on $s=a+i\R$, and thus we may use the geometric series expansion to find 
\begin{eqnarray}\label{conj}
\overline{\mathcal{I}_3}&=&-\frac{1}{2\pi i} \int_{a+i}^{a+iT}\zeta(s)\Delta'(1-s) \left(1+\frac{e^{-2i\phi}}{\Delta(1-s)}+\sum_{k=2}^{\infty}e^{-2ki\phi}\Delta(s)^k\right)\d s\nonumber\\
&=&\mathcal{J}_1+\mathcal{J}_2+O\left(T^{\frac12+\epsilon}\right),
\end{eqnarray}
say. First we consider $\mathcal{J}_1$. According to (\ref{delt}) we have
$$
\mathcal{J}_1=\int_{1}^{T}\left(\log\frac{\tau}{2\pi}+O\left(\frac1{\tau}\right)\right)\d\left(\frac{1}{2\pi i}\int_{a+i}^{a+i\tau}\zeta(s)\Delta(1-s)\d s\right).
$$
For the inner integral we use Gonek's lemma:
\begin{Lemma}\label{gonekl}
Suppose that $\sum_{n=1}^{\infty}a(n)n^{-s}$ converges for $\sigma>1$ where $a(n)\ll \epsilon$ for any $\epsilon >0$. Then we have, uniformly for $1<a \leq 2$,
\begin{align*}
&
\frac{1}{2\pi i}\int_{a+i}^{a+iT}\left(\frac{m}{2\pi}\right)^s\Gamma(s)\exp\left(\delta \frac{\pi i s}{2}\right)\sum_{n=1}^{\infty}\frac{a(n)}{n^s}\d s\\
&\quad{}=
\left\{\begin{array}{ll}
\sum_{n\leq \frac{Tm}{2\pi}}a(n)\exp(-2\pi i\frac{n}{m})+O\left(m^a T^{a-\frac12+\epsilon}\right)&\mbox{ if } \quad \delta = -1,\\
O(m^a) & \mbox{ if }\quad \delta = +1.
\end{array}\right.
\end{align*}
\end{Lemma}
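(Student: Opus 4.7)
The plan is to regard the integral as a sum of individual pieces and to analyse each one by contour shifting combined with Stirling's formula. Since the Dirichlet series converges absolutely on $\Re s=a>1$, I may interchange integration and summation to reduce everything to estimating, uniformly in $n$,
$$J_n(\delta):=\frac{1}{2\pi i}\int_{a+i}^{a+iT}\Bigl(\frac{m}{2\pi n}\Bigr)^s\Gamma(s)\exp\Bigl(\delta\frac{\pi is}{2}\Bigr)\d s,$$
and then summing the pieces $a(n)J_n(\delta)$. The case $\delta=+1$ is then quickly settled: on the segment Stirling yields $|\Gamma(\sigma+it)\exp(\pi i(\sigma+it)/2)|\ll t^{\sigma-1/2}e^{-\pi t}$, whence $J_n(+1)\ll (m/n)^a$; since $|a(n)|\ll_\varepsilon n^\varepsilon$ and $a>1$, the resulting Dirichlet tail is bounded by $O(m^a)$, as claimed.

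The interesting case is $\delta=-1$, where the factor $\exp(-\pi is/2)$ precisely cancels the $e^{-\pi t/2}$-decay of $\Gamma$ in the upper half-plane, so the integrand becomes genuinely oscillatory with saddle point in $t$ at $t\approx 2\pi n/m$. The essential identity underlying the lemma is the rotated Mellin--Barnes formula
$$\frac{1}{2\pi i}\int_{(c)}x^{-s}\Gamma(s)\exp\Bigl(-\frac{\pi is}{2}\Bigr)\d s=e^{-ix}\qquad(x>0,\ c>0),$$
which at $x=2\pi n/m$ delivers exactly the expected coefficient $\exp(-2\pi in/m)$. To extract this I would, for each $n\leq Tm/(2\pi)$ (whose saddle point lies in $[1,T]$), deform the segment $[a+i,a+iT]$ into the rectangle with corners $a+i,a+iT,-U+iT,-U+i$ and let $U\to\infty$. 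The sum of residues at the simple poles $s=0,-1,-2,\ldots$ of $\Gamma$ telescopes via $\sum_{k\geq 0}(-ix)^k/k!=e^{-ix}$ to the correct value, while the left-hand vertical side vanishes in the limit thanks to the factorial decay of $1/\Gamma$. For $n>Tm/(2\pi)$ the saddle lies above $T$; here I would instead shift to $\Re s=+\infty$, encountering no poles and an integrand that vanishes in the limit, so these $n$ contribute only to the error.

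The main obstacle, and the source of the error term $O(m^aT^{a-1/2+\varepsilon})$, lies in bounding the horizontal integrals at heights $t=1$ and $t=T$ produced by every deformation. Using Stirling in the form $|\Gamma(\sigma+iT)\exp(-\pi i(\sigma+iT)/2)|\asymp T^{\sigma-1/2}$, the integrand at height $T$ has modulus comparable to $T^{-1/2}(Tm/(2\pi n))^\sigma$, so integrating $\sigma$ across the strip of deformation yields a bound of order $T^{a-1/2}(m/n)^a/\bigl|\log(Tm/(2\pi n))\bigr|$. Multiplying by $|a(n)|\ll n^\varepsilon$ and summing, the weight $n^{-a+\varepsilon}$ converges absolutely since $a>1$, and one arrives at the claimed error $O(m^aT^{a-1/2+\varepsilon})$. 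The only delicate point is the narrow window of $n$ for which $Tm/(2\pi n)$ sits close to $1$, where the logarithm degenerates; for those finitely many exceptional $n$ I would estimate $J_n$ trivially by the length of the original segment times the Stirling bound, which is absorbed into the same error.
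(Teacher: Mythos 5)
Your opening step (absolute convergence, interchange of summation and integration, reduction to the single integrals $J_n(\delta)$) is exactly how the paper proceeds; but the paper then simply quotes Lemma~1 of \cite{gonekk} and Lemma~4 of \cite{gonekk2}, whereas you try to prove the key estimate from scratch, and there the argument has genuine gaps. First, the rectangle with corners $a+i$, $a+iT$, $-U+iT$, $-U+i$ contains none of the poles $s=0,-1,-2,\dots$ of $\Gamma(s)$: they all lie at height $0$, below your bottom edge at height $1$, so no residues are collected and the main term $e^{-2\pi i n/m}$ does not emerge from your deformation. If you lower the bottom edge below the real axis to capture the poles (or keep it at height $1$, in which case the main term must be hidden in the bottom integral), that bottom horizontal segment cannot be controlled by the absolute--value Stirling bound you invoke: for $\sigma<0$ the factor $(m/2\pi n)^{\sigma}=x^{|\sigma|}$ with $x=2\pi n/m>1$ grows, while $|\Gamma(\sigma\pm i)|\asymp 1/\Gamma(1+|\sigma|)$, so the integrand peaks at size roughly $e^{2\pi n/m}$ near $|\sigma|\approx 2\pi n/m$; its actual smallness rests entirely on oscillation, and establishing that cancellation is precisely the hard content of the lemma (Gonek obtains it by a stationary--phase analysis on the vertical segment itself). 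Note also that the Stirling form $|\Gamma(\sigma+it)|\asymp t^{\sigma-1/2}e^{-\pi t/2}$ is not uniform in $\sigma$ over an unbounded strip, so "integrating $\sigma$ across the strip of deformation" with that formula is not legitimate.

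Second, the proposed shift "to $\Re s=+\infty$" for $n>Tm/(2\pi)$ is impossible: along a horizontal line $\Gamma(\sigma+it)$ grows factorially in $\sigma$, which overwhelms the exponential decay of $(m/2\pi n)^{\sigma}$, so the horizontal connectors diverge; these $n$ must instead be treated on the original segment by a first--derivative (non--stationary--phase) estimate. Finally, the exceptional window where $\log\bigl(Tm/(2\pi n)\bigr)$ degenerates is not "finitely many $n$": already $|2\pi n/m-T|\le T^{1/2}$ contains $\gg mT^{1/2}$ integers, and your trivial bound for $J_n$ there is of order $T^{1/2}$ per term, so this window alone would contribute $\gg mT$, which swamps the claimed error $O\bigl(m^{a}T^{a-\frac12+\epsilon}\bigr)$ in the relevant regime $a=1+\frac{1}{\log T}$; this transition range is exactly where the cited proofs need second--derivative (van der Corput) or explicit stationary--phase estimates. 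Your $\delta=+1$ case is fine. To repair $\delta=-1$ you should either quote the cited lemmas, as the paper does, or carry out the saddle--point analysis of $J_n(-1)$ on the segment, with the saddle at $t=2\pi n/m$ and separate treatment of the ranges $2\pi n/m\le T-cT^{1/2}$, $|2\pi n/m-T|\le cT^{1/2}$ and $2\pi n/m>T+cT^{1/2}$.
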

\begin{proof}
Because of the absolute convergence we may interchange the order of summation and integration. For the integral we use Lemma 1 from  \cite{gonekk} and for the sum Lemma 4 from \cite{gonekk2}. 
\end{proof}

This yields
$$
\frac{1}{2\pi i}\int_{a+i}^{a+i\tau}\zeta(s)\Delta(1-s)\d s={\tau\over 2\pi}+O(\tau^{{1\over 2}+\epsilon}),
$$
resp.
\begin{eqnarray*}
\mathcal{J}_1&=&\int_{1}^{T}\left(\log\frac{\tau}{2\pi}+O\left(\frac1{\tau}\right)\right)\d\left(
\frac{\tau}{2\pi}+O\left(\tau^{\frac12+\epsilon}\right)\right)\\
&=&\frac{T}{2\pi}\log\frac{T}{2\pi e}+O\left(T^{\frac12+\epsilon}\right).
\end{eqnarray*}
In order to estimate $\mathcal{J}_2$ we use (\ref{delt}) again and obtain
$$
\mathcal{J}_2= e^{-2i\phi}\int_{1}^{T}\left(\log\frac{\tau}{2\pi}+O\left(\frac1{\tau}\right)\right)\d\left(\frac{1}{2\pi i}\int_{a+i}^{a+i\tau}\zeta(s)\d s\right).
$$
Since $a=1+\frac{1}{\log T}$ we have 
$$
\frac{1}{2\pi i}\int_{a+i}^{a+i\tau}\zeta(s)\d s=\frac{1}{2\pi}\sum_{n=1}^{\infty}\frac{1}{n^a}
\int_{1}^{\tau}\frac{1}{n^{it}}\d t = \frac{\tau}{2\pi}+O(\log T),
$$
which leads to 
$$
\mathcal{J}_2=e^{-2i\phi}\frac{T}{2\pi}\log\frac{T}{2\pi e}+O\left((\log T)^2\right).
$$

Hence, after conjugation (because of (\ref{conj})) we arrive at
$$
\sum_{0<t_n^\phi\leq T}\zeta({\textstyle{\frac12}}+it)=(1+e^{2i\phi})\frac{T}{2\pi}\log\frac{T}{2\pi e}+O\left(T^{\frac12+\epsilon}\right).
$$
Note that
\begin{equation}\label{zvaigzde}
1+e^{2i\phi}=2e^{i\phi}\cos\phi,
\end{equation}
and so we have proved the first asymptotic formula (\ref{vienas}) for any value $T$, satisfying (\ref{condi}). To get this uniformly in $T$ we allow an arbitrarily $T$ at the expense of an error 
$$
\zeta({\textstyle{1\over 2}}+it)\ll T^{{1\over 4}+\epsilon};
$$
the latter estimate is a trivial consequence of the Phragm\'en-Lindel\"of principle applied to the functional equation. This proves formula (\ref{vienas}).
\medskip

\subsection{Proof of the second moment} We can procced in a rather similar way. Assuming the same as for the first moment, we have
\begin{align*}
&\sum_{0<t_n^\phi\leq T}\left|\zeta\left({\textstyle{\frac12}}+it\right)\right|^2\\
&\quad{}=\frac{1}{2\pi i}\left\{\int_{2+i}^{2+iT}+\int_{2+iT}^{1-a+iT}+\int_{1-a+iT}^{1-a+i}+\int_{1-a+i}^{2+i}\right\}\zeta(s)\zeta(1-s)\frac{\Delta'(s)}{\Delta(s)-e^{2i\phi}}\d s+O(1)\\
&\quad{}=\sum_{i=1}^{4}\mathcal{I}_i+O(1),
\end{align*}
say.
\par

First we consider $\mathcal{I}_1$. According to (\ref{delt}) and geometric progression we find similar to the previous case of the first moment
\begin{align*}
\mathcal{I}_1=&-e^{-2i\phi}\frac{1}{2\pi i}\int_{2+i}^{2+iT}\zeta(s)^2{\Delta'\over \Delta}(s)\left(1+\sum_{k=1}^{\infty}e^{-2ki\phi}\Delta(s)^k\right)\d s\\
=&e^{-2i\phi}\int_{1}^{T}\left(\log\frac{\tau}{2\pi}+O\left(\frac{1}{\tau}\right)\right)\d \left(\frac{1}{2\pi i}\int_{2+i}^{2+i\tau}\zeta(s)^2 \d s\right)+O(1).
\end{align*}
Since
$$
\frac{1}{2\pi i}\int_{2+i}^{2+i\tau}\zeta(s)^2 \d s=\frac{1}{2\pi}\sum_{m,n=1}^{\infty}\frac{1}{(mn)^2}
\int_{1}^{\tau}\frac{1}{(mn)^{it}}\d t = \frac{\tau}{2\pi}+O(1),
$$
we get
$$
\mathcal{I}_1=e^{-2i\phi}\frac{T}{2\pi}\log\frac{T}{2\pi e}+O(1).
$$
By the same argument as for the first moment we find $\mathcal{I}_2\ll T^{\frac12+\epsilon}, \mathcal{I}_4\ll 1$. Again, the integral $\mathcal{I}_3$ gives the main contribution. Via $s\mapsto 1-\overline{s}$ we find
$$
\overline{\mathcal{I}_3}=-\frac{1}{2\pi i}\int_{a+i}^{a+iT}\zeta(1-s)\zeta(s)\frac{\Delta'(1-s)}{\Delta(1-s)-e^{-2i\phi}}\d s.
$$
By the functional equation (\ref{feq}) and the infinite geometric series,
\begin{align*}
\overline{\mathcal{I}_3}&=-\frac{1}{2\pi i}\int_{a+i}^{a+iT}\zeta(s)^2\Delta'(1-s)\left(1+\frac{e^{-2i\phi}}{\Delta(1-s)}+\sum_{k=2}^{\infty}e^{-2ki\phi}\Delta(s)^k\right)\d s\\
&=\mathcal{J}_1+\mathcal{J}_2+O\left(T^{\frac12+\epsilon}\right).
\end{align*}
We start with $\mathcal{J}_1$. According to (\ref{delt}) we have
\begin{align*}
\mathcal{J}_1=&
\int_{1}^{T}\left(\log\frac{\tau}{2\pi}+O\left(\frac1{\tau}\right)\right)
\d\left(\frac{1}{2\pi i}\int_{a+i}^{a+i\tau}\zeta(s)^2\Delta(1-s)\d s\right).
\end{align*}
Using Gonek's Lemma \ref{gonekl} for the inner integral and a standard bound for the Dirichlet divisor problem (see \cite{ivic}), we find
\begin{eqnarray*}
\frac{1}{2\pi i}\int_{a+i}^{a+i\tau}\zeta(s)^2\Delta(1-s)\d s&=&\sum_{mn\leq \frac{\tau}{2\pi}}1+O\left(\tau^{\frac12+\epsilon}\right)\\
&=&\frac{\tau}{2\pi}\log\frac{\tau}{2\pi}+(2c-1)\frac{\tau}{2\pi}+O\left(\tau^{{1\over 2}+\epsilon}\right).
\end{eqnarray*}
This yields
\begin{align*}
\mathcal{J}_1=&
\int_{1}^{T}\left(\log\frac{\tau}{2\pi}+O\left(\frac1{\tau}\right)\right)\d\left(\frac{\tau}{2\pi}\log\frac{\tau}{2\pi}+(2c-1)\frac{\tau}{2\pi}+O\left(\tau^{{1\over 2}+\epsilon}\right)\right)\\
=&\frac{T}{2\pi}\left(\log\frac{T}{2\pi}\right)^2+(2c-2)\frac{T}{2\pi}\log\frac{T}{2\pi e}+O\left(T^{{1\over 2}+\epsilon}\right).
\end{align*}

Finally we consider $\mathcal{J}_2$. According to (\ref{delt}) we have
\begin{align*}
\mathcal{J}_2=&e^{-2i\phi}\int_{1}^{T}\left(\log\frac{\tau}{2\pi}+O\left(\frac1{\tau}\right)\right)\d\left(\frac{1}{2\pi i}\int_{a+i}^{a+i\tau}\zeta(s)^2\d s\right).
\end{align*}
Since $a=1+\frac{1}{\log T}$,
$$
\frac{1}{2\pi i}\int_{a+i}^{a+i\tau}\zeta(s)^2\d s=\frac{1}{2\pi}\sum_{m,n=1}^{\infty}\frac{1}{(mn)^a}
\int_{1}^{\tau}\frac{1}{(mn)^{it}}\d t = \frac{\tau}{2\pi}+O\left( (\log T)^2\right),
$$
which leads to
$$
\mathcal{J}_2=e^{-2i\phi}\frac{T}{2\pi}\log\frac{T}{2\pi e}+O\left((\log T)^3\right).
$$
Collecting together, formula (\ref{du}) of the theorem follows in just the same way as the first moment asymptotics. This finishes the proof of Theorem \ref{two}.

\section{Proof of the corollaries}

\subsection{Proof of Corollary \ref{drei}} If the Riemann hypothesis is true, $N_\phi(T)\sim {T\over \pi}\log T$ by (\ref{conseq}), hence the existence and the formula for the mean-value follows immediately from (\ref{vienas}). If $\phi={\pi\over 2}$ the main term of (\ref{vienas}) vanishes and, using the unconditional bound $N_\phi(T)\geq N_\phi^\Delta(T)$ in place of the unproved Riemann hypothesis, we get in this case
\begin{equation}\label{uui}
{1\over N_\phi(T)}\sum_{\substack{0<t\leq T\\0\neq \zeta({1\over 2}+it)\in i\sR}}\zeta\left({\textstyle{\frac12}}+it\right)
\ll T^{-\frac12+\epsilon};
\end{equation}
consequently, the mean-value is zero for $\phi={\pi\over 2}$. Unconditionally, one has to use (\ref{peter}) to derive the bounds for $\liminf$ and $\limsup$ as in the corollary.

\subsection{Proof of Corollary \ref{vier}} By Theorem \ref{one} and \ref{two},
$$
N:=N_\phi^\Delta(2T)-N_\phi^\Delta(T)\sim {T\over 2\pi}\log T
$$
and
\begin{equation}\label{u}
\Sigma:=\sum_{\substack{T<t\leq 2T\\\zeta({1\over 2}+it)\in e^{i\phi}\sR}}\left|\zeta\left({\textstyle{\frac12}}+it\right)\right|^2\sim \frac{T}{2\pi}(\log T)^2.
\end{equation}
Suppose that $\vert\zeta({1\over 2}+it)\vert<c(\log T)^\alpha$ with some positive constant $c$ for all terms in the latter sum, then we obtain the trivial bound
$$
\Sigma<c^2 (\log T)^{2\alpha}N\sim c^2{T\over 2\pi}(\log T)^{1+2\alpha}.
$$
In order to have no contradiction with the right-hand side of (\ref{u}), we find $\alpha\leq {1\over 2}$. Consequently, in any interval $(T,2T]$ for sufficiently large $T$, there exists a value $t$ such that $\zeta({1\over 2}+it)\in e^{i\phi}\R$ and $\vert\zeta({1\over 2}+it)\vert \geq(\log t)^{1\over 2}$. If all those values $e^{-i\phi}\zeta({1\over 2}+it)$ would be negative, we would get a contradiction to (\ref{vienas}). In the purely imaginary case we get large values with both signs according to (\ref{uui}). This proves the corollary.

\subsection{Proof of Corollary \ref{funf}} Applying the Cauchy-Schwarz inequality,
$$
\Big\vert\sum_{\substack{0<t\leq T\\\zeta({1\over 2}+it)\in e^{i\phi}\sR}}\zeta\left({\textstyle{\frac12}}+it\right)\Big\vert^2\leq \Big( \sum_{\substack{0<t\leq T\\\Delta({1\over 2}+it)=e^{2i\phi}, \zeta({1\over 2}+it)\neq 0}}1\Big)\Big(\sum_{\substack{0<t\leq T\\\zeta({1\over 2}+it)\in e^{i\phi}\sR}}\vert\zeta\left({\textstyle{\frac12}}+it\right)\vert^2\Big).
$$
Inserting the asymptotic formulae of Theorem \ref{two} yields the assertion of the corollary.

\section{Gram points} 

The roots of the equation $\Delta({1\over 2}+it)=1$ constitute the so-called Gram points $t_n$ which are usually defined by the roots of the equation $\vartheta(t)=\pi n$ for $n\in\N$, where $\exp(i\vartheta(t))=\Delta({1\over 2}+it)^{-{1\over 2}}$. The function
$$
Z(t):=\exp(i\vartheta(t))\zeta({\textstyle{1\over 2}}+it)
$$
is real-valued for real $t$ and Gram's law states that 
$$
(-1)^nZ(t_n)>0.
$$
In fact, each sign change of the real valued function $Z(t)$ corresponds to a zero and the number of Gram points matches with the number of nontrivial zeros; by Theorem \ref{one} we have $N_\phi^\Delta(T)-N(T)\ll \log T$. As Gram observed in 1903 for the first zeta zeros, consecutive ordinates of nontrivial zeros are separated by a Gram point and vice versa; it should be mentioned that Gram himself seemed to doubt that this separation would persist indefinitely (cf. \cite{edwa}; page 127). The first failure of Gram's law appears for $n=126$, as discovered by Hutchinson \cite{hutch}. It was first shown by Titchmarsh \cite{titch} that Gram's law is violated infinitely often, and, recently, Trudgian \cite{trudg} has proved that it fails for a postive proportion of values of $n$ as $n\to\infty$. Since $\zeta({1\over 2}+it)<0$ if and only if $t$ is a Gram point for which Gram's law is not true, it follows that there exist infinitely many intersections of the curve $t\mapsto\zeta({1\over 2}+it)$ with the negative real axis and we may ask whether those points all lie in a bounded interval or not. This seems to be a difficult question which is of particular interest with respect to the conjectured denseness of $\zeta({1\over 2}+i\R)$ in the complex plane. 
\par

Our results have also an impact on previous research on Gram points. Titchmarsh \cite{titch} proved
$$
\sum_{n=M+1}^N Z(t_n)Z(t_{n+1})\sim -2(c+1)N
$$
for any $M$ as $N\to\infty$, which implies already the existence of infinitely many nontrivial zeros on the critical line; moreover, he conjectured
$$
\sum_{n=M+1}^N Z(t_n)^2Z(t_{n+1})^2\ll N(\log N)^A
$$
for some constant $A\geq 0$. The latter estimate was proved by Moser \cite{moser} with $A=5$. For this aim he showed the stronger estimate
$$
\sum_{n=M+1}^N Z(t_n)^4\ll N(\log N)^4.
$$
Next we shall derive a lower bound for this expression. Using  
$$
n={1\over \pi}\vartheta(t_n)\sim {1\over 2\pi}t_n\log t_n,
$$
the asymptotic formulae from Theorem \ref{two} translate to
\begin{equation}\label{mosel}
\sum_{n\leq N} Z(t_n)\sim 2N\qquad\mbox{and}\qquad \sum_{n\leq N}Z(t_n)^2\sim N\log N;
\end{equation}
the first formula above was already found by Titchmarsh \cite{titch} (with a different method). By the Cauchy-Schwarz inequality,
$$
\Big(\sum_{n\leq N}Z(t_n)^2\Big)^2\leq \Big(\sum_{n\leq N}Z(t_n)^4\Big)\Big(\sum_{n\leq N}1\Big).
$$
Substituting the asymptotic formula (\ref{mosel}) implies

\begin{Corollary}
As $N\to\infty$,
$$
\sum_{n\leq N}Z(t_n)^4 \geq (1+o(1))N(\log N)^2.
$$
\end{Corollary}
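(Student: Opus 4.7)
The plan is the Cauchy--Schwarz step that has been set up in the paragraph immediately preceding the Corollary. Explicitly, I would apply Cauchy--Schwarz to the vectors $(Z(t_n)^2)_{n\leq N}$ and $(1)_{n\leq N}$ to obtain
$$
\Big(\sum_{n\leq N}Z(t_n)^2\Big)^2 = \Big(\sum_{n\leq N}Z(t_n)^2\cdot 1\Big)^2 \leq \Big(\sum_{n\leq N}Z(t_n)^4\Big)\cdot N.
$$
On the left-hand side I would substitute the second asymptotic in (\ref{mosel}), giving a lower bound of $(1+o(1))N^2(\log N)^2$; dividing both sides by $N$ then yields
$$
\sum_{n\leq N}Z(t_n)^4 \geq (1+o(1))\,N(\log N)^2,
$$
which is the assertion of the corollary.

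The real work has already been done in deriving (\ref{mosel}) from Theorem~\ref{two} with $\phi=0$: one uses the identity $|\zeta({1\over 2}+it_n)|^2=Z(t_n)^2$ at a Gram point (immediate from $Z(t)=e^{i\vartheta(t)}\zeta({1\over 2}+it)$ and the fact that $Z$ is real-valued), together with the counting relation $n\sim\frac{1}{2\pi}t_n\log t_n$ which converts a sum over $t_n\leq T$ into a sum over $n\leq N$ with $N\sim\frac{T}{2\pi}\log T$. A short partial-summation argument then reshapes $\frac{T}{2\pi}(\log T)^2$ into $N\log N$ with the same error terms.

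There is no genuine obstacle in the Cauchy--Schwarz step itself; the only thing worth flagging is that this approach cannot be sharp, since equality in Cauchy--Schwarz requires $Z(t_n)^2$ to be essentially constant in $n$, which is far from the truth in view of Selberg's Gaussian distribution result for $\log\zeta({1\over 2}+it)$. Consequently, closing the gap between the lower bound $(\log N)^2$ obtained here and Moser's upper bound $(\log N)^4$ would seem to require a genuine fourth-moment evaluation of $|\zeta({1\over 2}+it)|$ restricted to Gram points, rather than a soft convexity argument.
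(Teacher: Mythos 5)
Your argument is exactly the paper's proof: the same Cauchy--Schwarz inequality applied to $(Z(t_n)^2)_{n\leq N}$ and $(1)_{n\leq N}$, followed by substituting the second asymptotic of (\ref{mosel}), which the paper likewise obtains from Theorem \ref{two} with $\phi=0$ via $n\sim\frac{1}{2\pi}t_n\log t_n$. The proposal is correct and coincides with the paper's approach.
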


\section{Concluding Remarks}

We conclude with a few heuristical remarks. In Corollary \ref{drei} we have proved the existence and explicit values for the mean of $\zeta({1\over 2}+it)$ on straight lines $e^{i\phi}\R$ passing through the origin. Their positivity has confirmed an observation by Edwards (mentioned in the introduction). The symmetry with respect to the real line is well reflected in the vanishing main term of the first moment (\ref{vienas}) for $\phi={\pi\over 2}$ whereas the non-vanishing of the second moment (\ref{du}) implies the existence of infinitely many non-zero values $\zeta({1\over 2}+it)$ on the imaginary axis. Next we compute from Corollary \ref{drei}, under assumption of the Riemann hypothesis, the mean of all individual mean-values by integration over $\phi$:
$$
{1\over \pi}\int_0^\pi \lim_{T\to\infty}{1\over N_\phi(T)}\sum_{\substack{0<t\leq T\\\zeta({1\over 2}+it)\in e^{i\phi}\sR}}\zeta\left({\textstyle{\frac12}}+it\right)\d\phi ={1\over \pi}\int_0^\pi {\textstyle{1\over 2}}(1+e^{2i\phi})\d\phi={\textstyle{1\over 2}};
$$
here we have used (\ref{zvaigzde}) for expressing the individual mean-value from Corollary \ref{drei} in a more convenient expression for the integration with respect to $\phi$. This is of special interest with respect to another discrete moment considered by Garunk\v stis \& Steuding recently. In \cite{gs}, they have proved that, for any fixed complex number $a$, as $T\to\infty$,
\begin{eqnarray*}
\sum_{0<\gamma_a\leq T} \zeta'(\rho_a)&=&({\textstyle{1\over 2}}-a){T\over 2\pi}\left(\log {T\over 2\pi}\right)^2+2(c_0-1+a){T\over 2\pi}\log {T\over 2\pi}\\
&&+2(c_1-c_0-a){T\over 2\pi}+E(T),
\end{eqnarray*}
where the summation is over nontrivial roots $\rho_a=\beta_a+i\gamma_a$ of the equation $\zeta(s)=a$, the numbers $c_n$ are the Stieltjes constants (defined by the Laurent expansion of $\zeta(s)$ around $s=1$), and the error term is $E(T)\ll T\exp(-C(\log T)^{1\over 3})$ with some absolute positive constant $C$; if the Riemann hypothesis is true, then $E(T)\ll T^{{1\over 2}+\epsilon}$. For $a={1\over 2}$ the main term is of lower order which is related to the curve $t\mapsto\zeta({1\over 2}+it)$ not passing as often as for any other value of $a$ through the point ${1\over 2}$ in the complex plane. Roughly speaking, it seems that for any $\phi$ the set $\zeta({1\over 2}+i\R)\cap e^{i\phi}\R$ splits up into two subsets of approximately equal size consisting of elements having either very small or large absolute value; the mean $e^{i\phi}\cos\phi$ separates these sets and the quantity ${1\over 2}$ is the average over all these mean-values.
\medskip

It is easy to extend the obtained results to Dirichlet $L$-functions (with very similar results), whereas the case of higher degree $L$-functions seems to be rather difficult to handle. This is already visible in the case of Dedekind zeta-functions $\zeta_{\sK}$ to quadratic number fields for which Conrey, Ghosh \& Gonek \cite{cgg} succeeded to prove the existence of infinitely many simple zeros by factoring $\zeta_{\sK}=\zeta L$, where $L$ is the Dirichlet $L$-function to the character associated with $\K$, and using the symmetry of the zeros of the zeta-function.
\medskip

\noindent {\bf Acknowledgements.} The first named author was supported by an STIBET Contact Fellowship of the Graduate Schools at W\"urzburg University; he is grateful for this encouragement and wants to thank in particular Dr. Stephan Schr\"oder-K\"ohne.

\small

\bigskip

\ \bigskip

\tiny

\noindent\parbox{7cm}{
Justas Kalpokas\\
Faculty of Mathematics and Informatics\\ 
Vilnius University\\
Naugarduko 24, 03225 Vilnius, Lithuania\\
justas.kalpokas@mif.vu.lt}
\hfill
\parbox{7cm}{\noindent
Justas Kalpokas\\
Faculty of Social Informatics\\
Mykolas Romeris University\\
Ateities 20, 08303 Vilnius, Lithuania\\
justas.kalpokas@mruni.lt
}
\medskip

\noindent
J\"orn Steuding\\
Department of Mathematics, W\"urzburg University\\ 
Am Hubland, 97\,218 W\"urzburg, Germany\\
steuding@mathematik.uni-wuerzburg.de

\end{document}